\newtheorem{thm}{Theorem}[section]
\newtheorem{lem}[thm]{Lemma}
\newtheorem{prop}[thm]{Proposition}
\theoremstyle{definition}
\theoremstyle{remark}
\numberwithin{equation}{section}
\newcommand{\vertiii}[1]{{\left\vert\kern-0.25ex\left\vert\kern-0.25ex\left\vert #1 
    \right\vert\kern-0.25ex\right\vert\kern-0.25ex\right\vert}}
\newcommand{\ldb}{\mathopen{\lbrack\!\lbrack}}
\newcommand{\rdb}{\mathclose{\rbrack\!\rbrack}}
\begin{document}
\title[Cut finite element error estimates for a class of nonlinear elliptic PDEs]{Cut finite element error estimates for a class of nonlinear elliptic PDEs}%
\author{Georgios Katsouleas\textsuperscript{1}}
\address{\textsuperscript{1}Department of Mathematics, National Technical University of Athens, Zografou Campus, 15780, Greece.}
\thanks{}
\email{gekats@mail.ntua.gr}

 \author{Efthymios N. Karatzas\textsuperscript{1,2}}
\address{\textsuperscript{2}SISSA (affiliation), International School for Advanced Studies, Mathematics Area, mathLab, Via Bonomea 265, Trieste, 34136, Italy.}
\email{karmakis@math.ntua.gr}
 
\author{Fotios Travlopanos\textsuperscript{1}}
\thanks{}
\email{ftravlo@gmail.com}

\thanks{\itshape{2010 Mathematics Subject Classification.} \normalfont 35J61, 65N30, 65N85}
\subjclass[2000]{Primary}%
\keywords{cut finite element method, elliptic, semilinear, error estimates}%
\date{\today} 

\thanks{}%
\subjclass{}%

\begin{abstract}
Motivated by many applications in complex domains with boundaries exposed  to large topological changes or deformations, fictitious domain methods regard the actual domain of interest  as being embedded in a fixed Cartesian background. This is usually achieved via a geometric parameterization of its boundary via level--set functions.
In this note, the a priori analysis of unfitted numerical schemes with cut elements is extended beyond the realm of linear problems. 
More precisely, we consider the discretization of 
 semilinear elliptic boundary value problems  of the form $- \Delta  u +f_1(u)=f_2$ with polynomial nonlinearity via the cut finite element method.  Boundary conditions are enforced, using a Nitsche--type approach. To ensure stability and  error estimates that are independent of the position of the boundary with respect to the mesh, the formulations are augmented with additional boundary zone ghost penalty terms. These terms act on the jumps of the normal gradients at faces associated with cut elements. A--priori error estimates are derived, while
numerical  examples  illustrate the implementation of the method and validate the theoretical findings.    
\end{abstract}

\maketitle

\section{Introduction}

Fictitious domain methods have a long history, dating back to the pioneering  work of Peskin \cite{P1972} and are currently enjoying great popularity, having been successfully applied to a variety of problems. Several variants include such methods as the ghost-cell finite difference method \cite{WFC13}, cut--cell volume method \cite{PHO16}, immersed interface \cite{KB18}, ghost fluid \cite{BG14},  shifted boundary methods \cite{ACS20,  KSASRa2019, MS18}, $\phi$--FEM \cite{DL19}, and CutFEM \cite{BCHLM2014,BH2010,BH2012, L19, HaHa02}, among others.
For a comprehensive overview of this research area, the interested reader is referred to the review paper \cite{MI05}. Considerable impetus has been provided  in the contexts of fluid--structure interaction and reduced order modeling for parametrically--dependent domains \cite{KBR19,KSNSRa2019}. 

Such cases pose severe challenges in the discretization and even result to  simulations of diminished quality. For instance, the generation of  a suitable conforming mesh is a challenging and computationally intensive task. As a means to bypass such complications, it is instructive to consider the actual computational domain of interest  as being embedded in an unfitted background mesh. More precisely, this can be achieved via a geometric parameterization of its boundary via level--set geometries, using a fixed Cartesian background and its associated mesh for each new domain configuration. This approach avoids the need to remesh, as well as the need to develop a reference domain formulation. In such cases, immersed and embedded methods compare favorably to fitted mesh FEMs, providing simple and efficient schemes for the numerical approximation of PDEs in both cases of static and evolving geometries.

The overall objective of this note  is to extend the a--priori analysis  of cutFEM beyond the realm of linear problems. To this end, we propose an unfitted framework for the numerical solution of a semilinear elliptic boundary value problem  with a polynomial nonlinearity. We start by introducing the model problem and the necessary notation in Section \ref{Section2}. Then, Section \ref{Section3} focuses on the derivation of the a--priori error estimates and a numerical experiment is reported in Section \ref{Section4}, verifying the theoretical convergence rates and showcasing the accuracy of the method. The paper concludes with a brief discussion  of our contributions and suggestions for future work in Section \ref{Section5}.

\section{The model  problem and preliminaries} \label{Section2}  

As a model problem, we consider a semilinear elliptic boundary value problem of the form 
 \begin{eqnarray} \label{Semilinear Problem} 
 - \Delta  u +f_1(u)&=&f_2 \quad\,\,\,\,\, \text{in $\Omega$}, \\
u&=&0   \qquad \text{ on $\Gamma$}, \nonumber
\end{eqnarray}
where $\Omega \subset \mathbb{R}^2$ is a simply connected open domain with  boundary $\Gamma =\partial \Omega$. The nonlinearity is assumed to be of polynomial type $f_1(u)=\left|u\right|^{p-2}u$. Such equations have been studied previously in the context of problems with critical exponents \cite{Clement:1996}  and are referred to  in the theory of boundary layers of viscous fluids \cite{W1975} as Emden--Fowler equations. It is straightforward to verify that the weak formulation 
\begin{equation}\label{weak_sol}
\int_{\Omega}\left(\nabla u \cdot \nabla v + f_1(u)v\right)=\int_{\Omega}  f_2v, \ \ \text{for every } \ v \in H_0^1(\Omega)
\end{equation}
of (\ref{Semilinear Problem}) admits a weak solution $u \in H_0^1(\Omega)$. Following a standard energy argument and assuming the force $f_2 \in H^{-1}(\Omega)$, the a--priori error bound 
$$
\frac{1}{2}\left\|\nabla u\right\|^2_{L^2(\Omega)}+\left\|u\right\|^p_{L^p(\Omega)}\leq \frac{1}{2}\left\|f_2\right\|^2_{H^{-1}(\Omega)}
$$
readily follows, indicating a continuous dependence of the solution on the data.
  
Implementation of an unfitted FEM for the discretization of  (\ref{weak_sol}) requires a fixed background domain $\mathcal{B}$ which contains $\Omega$; let  $\mathcal{B}_h$ its corresponding shape--regular mesh. The  \itshape active \normalfont mesh 
$$
\mathcal{T}_h=\left\{T\in \mathcal{B}_h: T\cap \Omega\neq \emptyset\right\}
$$  
is the  minimal submesh of $ \mathcal{B}_h$ which covers $\Omega$ and is in general \itshape unfitted \normalfont to its boundary $\Gamma$. As usual, the subscript $h=\max_{T\in  \mathcal{B}_h}diam(T)$ indicates the global mesh size. The finite element space for discrete solutions will in fact be built upon the \itshape extended \normalfont domain $\Omega_{\mathcal{T}}=\bigcup_{T \in \mathcal{T}_h}T$ which corresponds to  $\mathcal{T}_h$.  Fictitious domain  methods  require boundary conditions at $\Gamma$ to be weakly satisfied through a variant of Nitsche's method. On the other hand, coercivity over the whole computational domain $\Omega_{\mathcal{T}}$  is ensured by means of additional ghost penalty terms which act on the gradient jumps in the boundary zone; see, for instance, \cite{BH2012}. Therefore, a more detailed analysis of the interface grid is required; the submesh consisting of all cut elements  is denoted
$$
{G}_h:=\{T\in \mathcal{T}_h:T\cap\Gamma\neq\emptyset\}
$$
and the relevant set of faces upon which ghost penalty will be applied is given by
$$
\mathcal{F}_{G}:=\left\{F: F \text{ is a  face of } T\in G_h, F\notin \partial\Omega_{\mathcal{T}}\right\}.
$$

Considering the finite element space 
$$
 V_h:=\left\{w_h\in C^0(\bar{\Omega}_T): w_h|_{T}\in \mathcal{P}^1(T), T \in \mathcal {T}_h\right\}
$$
for approximate solutions, we define discrete counterparts to the continuous bilinear and linear forms in (\ref{weak_sol}), setting
\begin{align}
a_h(u_h,v_h)&=\int_{\Omega} \nabla u_h \cdot \nabla v_h -\int_{\Gamma_D}v_h\left({\bf n_\Gamma}\cdot \nabla u_h\right)-\int_{\Gamma_D}u_h\left({\bf n_\Gamma}\cdot \nabla v_h\right)+\gamma_Dh^{-1}\int_{\Gamma_D}u_hv_h, \label{ah}\\
\ell_h(v_h)&=\int_{\Omega}f_2 v_h \label{ellh}
\end{align}
for $u_h, v_h \in V_h$. Here, ${\bf n_{\Gamma}}$ denotes the outward pointing unit normal vector on the boundary  $\Gamma$. The cutFEM discretization scheme reads as follows: find a discrete state $u_h \in V_h$, such that 
\begin{equation}\label{Discr_state}
a_h(u_h,v_h)+ { j_h(u_h,v_h)  }+\int_{\Omega}f_1(u_h) v_h=\ell_h(v_h),
\end{equation}
for all $v_h \in V_h$, where the stabilization term
 \begin{equation} \label{Stabilization term}
  j_h(u_h,v_h)=\sum_{F\in \mathcal{F}_G}\gamma_1 h\int_{F}\ldb{{\bf n_F}}\cdot\nabla  u_h\rdb\ldb{{{\bf n_F}}\cdot\nabla} v_h\rdb,
  \end{equation}
acts on the gradient jumps $ \ldb{{\bf n_F}}\cdot\nabla  u_h\rdb:={\bf n_F}\cdot\nabla  u_h \Bigr|_{K}-{\bf n_F}\cdot\nabla  u_h \Bigr|_{K^{'}}$of $u_h$  over element faces $F=K\cap K^{'}$ in the interface zone and  is  included in the bilinear form to extend its coercivity from the physical domain $\Omega$ to $\Omega _{\mathcal{T}}$. The quantities $\gamma_D$ 
and $\gamma_1$  in (\ref{ah}) and (\ref{Stabilization term})  are  positive penalty parameters; see Lemma \ref{stab} below.

\section{Norms, approximation properties and a--priori analysis} \label{Section3} 

The convergence analysis of the method (\ref{Discr_state}) is based on the following mesh--dependent norms:
  \begin{eqnarray}
& \vertiii{v}_{*}^2 = \left\|\nabla v\right\|^2_{L^2(\Omega)}+ \left\|h^{-1/2}\gamma_{D}^{1/2} v\right\|^2_{L^2(\Gamma)},\nonumber  \\    
& \vertiii{v}_{h}^2 = \left\|\nabla v\right\|^2_{L^2(\Omega_{\mathcal{T}})}+ \left\|h^{-1/2}\gamma_{D}^{1/2} v\right\|^2_{L^2(\Gamma)}+j_h(v,v).\nonumber
 \end{eqnarray}
The trace inequality  $\left\|v\right\|_{L^2(T \cap \Gamma)} \leq C_{tr}\left(h_T^{-1/2}\left\|v\right\|_{L^2(T)}+h_T^{1/2}\left\|\nabla v\right\|_{L^2(T)}\right)$ for $ T \in \mathcal{T}_h$ and $h_T=diam(T)$ implies in particular: $$\vertiii{v_h}_{*}\leq C_{*} \vertiii{v_h}_h.$$
A necessary approximation result is stated next:

\begin{lem}[\cite{BH2012}, Lemma 5]\label{approx}
Let $\mathcal{E}: H^2(\Omega) \rightarrow H^2(\Omega_{\mathcal{T}})$ a linear $H^2$--extension operator   on ${\Omega}_{\mathcal T}$, such that $\mathcal{E}\phi|_{\Omega} =\phi|_{\Omega}$, $\mathcal{E}\phi|_\Gamma =\phi|_\Gamma$, $\left\|\mathcal{E}\phi\right\|_{H^2(\Omega_{\mathcal{T}})}\lesssim \left\|\phi\right\|_{H^2(\Omega)}$ and $\Pi_h: H^1(\Omega) \rightarrow V_h$ the Cl{\'e}ment-type extended interpolation operator  defined by  $\Pi_h \phi= \Pi^*_h\mathcal{E} \phi$, where $\Pi_h^*: H^1(\Omega_{\mathcal{T}}) \rightarrow V_h$ is the standard Cl{\'e}ment interpolant.  Then, the estimate
\begin{equation}\label{approx2}
\vertiii{u-\Pi_h u}_{*}+j(\Pi_h u, \Pi_h u)^{1/2}\leq Ch |u|_{H^2(\Omega)}  
\end{equation}
holds for every $u \in H^2(\Omega)$.
\end{lem}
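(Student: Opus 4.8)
The plan is to exploit the identity $\mathcal{E}u|_{\Omega}=u$ together with the standard approximation properties of the Clément interpolant $\Pi_h^*$ transplanted to the extended domain $\Omega_{\mathcal{T}}$. Writing $e:=\mathcal{E}u-\Pi_h u=\mathcal{E}u-\Pi_h^*\mathcal{E}u$, I would first record the local Clément estimates
\[
\norm{e}_{L^2(T)}\lesssim h^2\abs{\mathcal{E}u}_{H^2(\omega_T)},\qquad \norm{\nabla e}_{L^2(T)}\lesssim h\abs{\mathcal{E}u}_{H^2(\omega_T)},
\]
valid on each $T\in\mathcal{T}_h$, where $\omega_T$ denotes the usual element patch. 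Summing over $T$, using the finite overlap of the patches guaranteed by shape regularity, and then invoking the $H^2$-stability $\norm{\mathcal{E}u}_{H^2(\Omega_{\mathcal{T}})}\lesssim\norm{u}_{H^2(\Omega)}$ of the extension, yields global $L^2$- and $H^1$-bounds of order $h^2$ and $h$ on $\Omega_{\mathcal{T}}$. Since $\Omega\subseteq\Omega_{\mathcal{T}}$ and $e=u-\Pi_h u$ on $\Omega$, the gradient contribution $\norm{\nabla(u-\Pi_h u)}_{L^2(\Omega)}$ to $\vertiii{\cdot}_{*}$ is immediately controlled by $Ch\abs{u}_{H^2(\Omega)}$.

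For the boundary contribution I would apply the quoted trace inequality elementwise on the cut cells $T\in G_h$,
\[
\norm{e}_{L^2(T\cap\Gamma)}\leq C_{tr}\left(h_T^{-1/2}\norm{e}_{L^2(T)}+h_T^{1/2}\norm{\nabla e}_{L^2(T)}\right),
\]
insert the local Clément estimates above (yielding an $O(h^{3/2})$ contribution on each cut cell), and sum over $G_h$. Multiplying by $h^{-1/2}\gamma_D^{1/2}$ then produces $\norm{h^{-1/2}\gamma_D^{1/2}(u-\Pi_h u)}_{L^2(\Gamma)}\lesssim h\abs{u}_{H^2(\Omega)}$, which together with the previous step completes the bound on $\vertiii{u-\Pi_h u}_{*}$.

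The crux of the argument is the ghost-penalty term $j_h(\Pi_h u,\Pi_h u)$. The key observation is that $\mathcal{E}u\in H^2(\Omega_{\mathcal{T}})$ has a continuous normal gradient across every interior face, so $\ldb\mathbf{n_F}\cdot\nabla\mathcal{E}u\rdb=0$ and hence the jumps of $\Pi_h u$ coincide with jumps of the interpolation error, $\ldb\mathbf{n_F}\cdot\nabla\Pi_h u\rdb=-\ldb\mathbf{n_F}\cdot\nabla e\rdb$. On each $F\in\mathcal{F}_G$ with $F=K\cap K'$ I would bound the jump by the one-sided face norms and apply a trace inequality of the form $h\norm{\nabla e}_{L^2(F)}^2\lesssim\norm{\nabla e}_{L^2(K)}^2+h^2\abs{\mathcal{E}u}_{H^2(K)}^2$, where the second term reflects the fact that only $\mathcal{E}u$ contributes to the $H^2$-seminorm, the interpolant being piecewise affine so that $\abs{\Pi_h u}_{H^2(K)}=0$. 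Summing over $\mathcal{F}_G$ and reusing the $H^1$-Clément estimate and the extension stability gives $j_h(\Pi_h u,\Pi_h u)\lesssim h^2\abs{u}_{H^2(\Omega)}^2$, i.e. the desired $O(h)$ bound after taking the square root.

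I expect the principal obstacle to be the careful treatment of this last term: one must rewrite the gradient jumps through the interpolation error using the $H^2$-regularity of the extension, control the face norms by element norms via trace and inverse inequalities, and keep track of the fact that the estimates are genuinely local to the boundary zone $G_h$. A secondary technical point, routine but worth noting, is the patchwise nature of the Clément estimates, which requires the finite-overlap property of the patches $\omega_T$ — a consequence of shape regularity — in order to assemble the local bounds into global ones without losing powers of $h$.
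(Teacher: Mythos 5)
The paper offers no proof of this lemma---it is imported verbatim from Burman--Hansbo \cite{BH2012}, Lemma 5---and your argument is precisely the one given there: local Cl\'ement estimates for $\mathcal{E}u$ assembled via the finite overlap of patches and the $H^2$-stability of the extension, the element-wise trace inequality on cut cells for the boundary term, and, for the ghost penalty, the vanishing of $\ldb{\bf n_F}\cdot\nabla\mathcal{E}u\rdb$ across interior faces combined with a face-to-element trace inequality in which only $\mathcal{E}u$ contributes to the second-order term since $\Pi_h u$ is piecewise affine. The only cosmetic discrepancy is that your chain of estimates terminates in the full norm $\left\|u\right\|_{H^2(\Omega)}$, because the extension operator as stated is only full-norm stable, whereas the lemma is written with the seminorm $\left|u\right|_{H^2(\Omega)}$.
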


Regarding stability, the coercivity and continuity properties of the augmented bilinear form $\left[a_h+j_h\right](\cdot, \cdot)$ now read as follows:

\begin{lem}[\cite{BH2012}, Lemmata 6 and 7]\label{stab}
Defining the method \normalfont (\ref{Discr_state}) \itshape with sufficiently large parameter $\gamma_D$   and $\gamma_1=1$, then 
\begin{equation}\label{coerc}
 c_{bil}\vertiii{u_h}_h^2\leq a_h(u_h,u_h)+j_h(u_h,u_h), \quad a_h(u_h,v_h)+j_h(u_h,v_h) \leq  C_{bil}\vertiii{u_h}_h \vertiii{v_h}_h,
\end{equation}
for every $u_h, v_h \in V_h$, and 
\begin{equation}\label{cont}
a_{h}(v,v_h)\leq c_a\vertiii{v}_{*}\vertiii{v_h}_{*}, \quad \text{ for every } v \in \left[H^2(\Omega)+V_h\right] \text{ and } v_h \in V_h,  
\end{equation}
independently of $h$ and of the way in which the boundary $\Gamma$ intersects the background mesh.
\end{lem}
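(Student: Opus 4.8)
The plan is to reduce everything to two robust, cut-independent building blocks and then assemble the three estimates by elementary algebra. The first building block is the inverse trace inequality for discrete functions, $\norm{h^{1/2}\mathbf{n}_\Gamma\cdot\nabla w_h}_{L^2(\Gamma)}^2\lesssim\norm{\nabla w_h}_{L^2(\Omega_{\mathcal T})}^2$, obtained by applying the elementwise inverse estimate on each cut element $T\in G_h$ with a constant independent of how $\Gamma$ meets $T$, then summing. The second, and decisive, building block is the ghost-penalty (fictitious-domain) estimate
\begin{equation}\label{ghost}
\norm{\nabla w_h}_{L^2(\Omega_{\mathcal T})}^2\lesssim\norm{\nabla w_h}_{L^2(\Omega)}^2+j_h(w_h,w_h),\qquad w_h\in V_h,
\end{equation}
which is exactly what the penalty term \eqref{Stabilization term} is designed to supply: it controls the gradient on the fictitious parts of the cut elements by the physical gradient plus the accumulated normal-jump penalties. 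I would take \eqref{ghost} directly from \cite{BH2012}; establishing it robustly in the cut position is the genuine obstacle, but it is the one ingredient already available off the shelf, and every claim of $h$- and cut-independence in the lemma traces back to it.

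For the coercivity in \eqref{coerc}, I would first expand
\begin{equation*}
a_h(u_h,u_h)+j_h(u_h,u_h)=\norm{\nabla u_h}_{L^2(\Omega)}^2-2\int_{\Gamma_D}u_h(\mathbf{n}_\Gamma\cdot\nabla u_h)+\gamma_Dh^{-1}\norm{u_h}_{L^2(\Gamma)}^2+j_h(u_h,u_h).
\end{equation*}
The Nitsche cross term is then handled by Cauchy--Schwarz and Young's inequality, $2\abs{\int_{\Gamma_D}u_h(\mathbf{n}_\Gamma\cdot\nabla u_h)}\leq\delta\norm{h^{1/2}\mathbf{n}_\Gamma\cdot\nabla u_h}_{L^2(\Gamma)}^2+\delta^{-1}\norm{h^{-1/2}u_h}_{L^2(\Gamma)}^2$, after which the two building blocks bound the first summand by $\delta C(\norm{\nabla u_h}_{L^2(\Omega)}^2+j_h(u_h,u_h))$. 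Choosing $\delta$ small enough that this is absorbed and then $\gamma_D$ large enough to dominate the $\delta^{-1}h^{-1}\norm{u_h}_{L^2(\Gamma)}^2$ term leaves a positive multiple of $\norm{\nabla u_h}_{L^2(\Omega)}^2+j_h(u_h,u_h)+\gamma_Dh^{-1}\norm{u_h}_{L^2(\Gamma)}^2$; a final application of \eqref{ghost} converts the physical gradient into $\norm{\nabla u_h}_{L^2(\Omega_{\mathcal T})}^2$, yielding $c_{bil}\vertiii{u_h}_h^2$.

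The two continuity bounds I would treat as pure Cauchy--Schwarz, term by term. For the $V_h\times V_h$ estimate in \eqref{coerc}, the volume and boundary-penalty terms pair directly into the $\vertiii{\cdot}_h$ norms, the penalty form obeys $j_h(u_h,v_h)\leq j_h(u_h,u_h)^{1/2}j_h(v_h,v_h)^{1/2}$, and each Nitsche flux term is split as $\norm{h^{-1/2}v_h}_{L^2(\Gamma)}\norm{h^{1/2}\mathbf{n}_\Gamma\cdot\nabla u_h}_{L^2(\Gamma)}$ and closed by the inverse trace inequality; summing gives $C_{bil}$. The mixed estimate \eqref{cont} proceeds identically, except that for the rough argument $v\in H^2(\Omega)+V_h$ the normal flux $\norm{h^{1/2}\mathbf{n}_\Gamma\cdot\nabla v}_{L^2(\Gamma)}$ is controlled by the continuous trace inequality recalled just before the lemma (applied componentwise to $\nabla v$) for the $H^2$-part and by the inverse trace inequality for the discrete part, in both cases reproducing $\vertiii{v}_*$; the remaining factors are the $\vertiii{v_h}_*$ norms, so the constant $c_a$ follows. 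All constants are independent of $h$ and of the cut configuration precisely because both building blocks are, which is where the ghost penalty does its work.
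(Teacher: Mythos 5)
The paper does not actually prove this lemma; it is imported verbatim from Burman--Hansbo (Lemmata 6 and 7 of \cite{BH2012}), so your proposal has to be measured against that source rather than against an argument in the text. Your reconstruction is the standard one and is essentially right: the two building blocks you isolate --- the elementwise inverse trace inequality $\norm{h^{1/2}\mathbf{n}_\Gamma\cdot\nabla w_h}_{L^2(\Gamma)}\lesssim\norm{\nabla w_h}_{L^2(\Omega_{\mathcal T})}$ (for $\mathcal{P}^1$ elements immediate from $\nabla w_h$ being constant on each $T$ and $\abs{T\cap\Gamma}\lesssim h$) and the ghost-penalty equivalence $\norm{\nabla w_h}^2_{L^2(\Omega_{\mathcal T})}\lesssim\norm{\nabla w_h}^2_{L^2(\Omega)}+j_h(w_h,w_h)$ --- are exactly the two ingredients of \cite{BH2012}, and your Young-inequality absorption with $\delta$ small and then $\gamma_D$ large is the correct route to the coercivity in (\ref{coerc}). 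The discrete continuity bound also closes as you describe, because $\vertiii{\cdot}_h$ contains $\norm{\nabla\cdot}_{L^2(\Omega_{\mathcal T})}$, which is precisely where the inverse trace inequality lands.

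The one step that does not literally close is the mixed continuity bound (\ref{cont}). With $\vertiii{\cdot}_*$ as defined in this paper (only $\norm{\nabla v}_{L^2(\Omega)}$ plus the weighted boundary $L^2$ term), the Nitsche flux terms are \emph{not} controlled by $\vertiii{v}_*\vertiii{v_h}_*$: the continuous trace inequality applied to $\nabla v$ produces $\norm{\nabla v}_{L^2(\Omega_{\mathcal T})}+h\abs{v}_{H^2(\Omega_{\mathcal T})}$, and the inverse estimate for the discrete part produces $\norm{\nabla v_h}_{L^2(\Omega_{\mathcal T})}$; none of these is a term of $\vertiii{\cdot}_*$ as written, and $\norm{\nabla v_h}_{L^2(\Omega_{\mathcal T})}$ cannot be bounded by $\norm{\nabla v_h}_{L^2(\Omega)}$ without reinvoking the ghost penalty. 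So your claim that both cases ``reproduce $\vertiii{v}_*$'' is the gap --- though the fault is largely inherited from the paper's restatement: in \cite{BH2012} the $*$-norm is defined to include the additional term $\norm{h^{1/2}\mathbf{n}_\Gamma\cdot\nabla v}_{L^2(\Gamma)}$ precisely so that (\ref{cont}) follows by bare Cauchy--Schwarz. To make your argument airtight, either enlarge $\vertiii{\cdot}_*$ accordingly or state (\ref{cont}) with the extra $h\abs{v}_{H^2}$ and $\vertiii{v_h}_h$ contributions; neither change affects how the lemma is used downstream, since Proposition \ref{pre} immediately upgrades $\vertiii{e_h}_*$ to $\vertiii{e_h}_h$ and the additional interpolation-error terms remain $\mathcal{O}(h)\abs{u}_{H^2(\Omega)}$.
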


Hence, due to the gradient penalty in the boundary zone, control of the $L^2$--norm of the gradient may be extended  over the whole active mesh $\mathcal{T}_h$.  

We next quantify how the additional  term $j_h(u_h, v_h)$ affects the Galerkin orthogonality and consistency of the variational formulation (\ref{Discr_state}). 

\begin{lem}[Galerkin orthogonality]\label{Galerkin orthogonality}
Let $u \in H^1_0(\Omega)$ be the solution to the semilinear problem \normalfont (\ref{weak_sol}) \itshape and $u_h \in V_h$ its finite element approximation in  \normalfont  (\ref{Discr_state}).  \itshape Then, 
\begin{equation}\label{galerkin}
a_h(u_h-u, v_h) = \int_{\Omega}\left[f_1(u)-f_1(u_h)\right]v_h-j_h(u_h, v_h), \quad \text{for every} \quad  v_h \in V_h.
  \end{equation} 
\end{lem}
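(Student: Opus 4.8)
The plan is to establish a \emph{consistency} identity for the exact solution $u$ when it is tested against discrete functions through the Nitsche form $a_h$, and then to subtract this identity from the discrete scheme (\ref{Discr_state}). Throughout I assume the elliptic regularity $u \in H^2(\Omega)$ that is needed to give the pointwise Laplacian and the normal flux on $\Gamma$ a meaning, in line with the hypotheses already used in Lemma \ref{approx}; here $\Gamma_D = \Gamma$, the entire boundary being of Dirichlet type.

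First I would evaluate $a_h(u, v_h)$ for an arbitrary $v_h \in V_h$ directly from the definition (\ref{ah}). Since the exact solution satisfies the homogeneous Dirichlet condition $u|_\Gamma = 0$, the symmetrizing Nitsche term $\int_{\Gamma_D} u\,({\bf n_\Gamma}\cdot\nabla v_h)$ and the penalty term $\gamma_D h^{-1}\int_{\Gamma_D} u\, v_h$ vanish identically. Thus $a_h(u, v_h)$ collapses to $\int_\Omega \nabla u \cdot \nabla v_h - \int_{\Gamma_D} v_h\,({\bf n_\Gamma}\cdot\nabla u)$.

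Next I would apply Green's formula to the remaining volume term, namely $\int_\Omega \nabla u \cdot \nabla v_h = -\int_\Omega (\Delta u)\, v_h + \int_\Gamma ({\bf n_\Gamma}\cdot\nabla u)\, v_h$. The boundary integral produced here cancels exactly against the flux term $-\int_{\Gamma_D} v_h\,({\bf n_\Gamma}\cdot\nabla u)$ retained from $a_h$, because $\Gamma_D = \Gamma$. This leaves $a_h(u, v_h) = -\int_\Omega (\Delta u)\, v_h$, and invoking the strong form $-\Delta u = f_2 - f_1(u)$ of (\ref{Semilinear Problem}) yields the consistency identity $a_h(u, v_h) + \int_\Omega f_1(u)\, v_h = \int_\Omega f_2\, v_h = \ell_h(v_h)$.

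Finally I would subtract this consistency identity from the discrete equation (\ref{Discr_state}). The two $\ell_h(v_h)$ contributions cancel, the bilinear parts combine into $a_h(u_h - u, v_h)$ by linearity in the first argument, the ghost penalty $j_h(u_h, v_h)$ survives since it has no counterpart in the continuous problem, and the nonlinear terms merge into $\int_\Omega [f_1(u_h) - f_1(u)]\, v_h$; rearranging produces exactly (\ref{galerkin}). The one delicate point is the interplay between Green's formula and the Nitsche flux term: it is precisely the symmetric structure of the Nitsche discretization that forces the $\Gamma$--boundary contributions to cancel and makes the exact solution consistent for $a_h$, while the nonconsistency of the stabilization is what deposits the residual $-j_h(u_h, v_h)$ on the right-hand side.
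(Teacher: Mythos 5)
Your proof is correct and follows essentially the same route as the paper: the paper simply asserts that the consistency identity $a_h(u,v_h)+\int_\Omega f_1(u)v_h=\ell_h(v_h)$ is ``immediate'' from the definitions and then subtracts it from the discrete scheme, which is exactly what you do, only with the Green's formula and boundary--term cancellations spelled out explicitly. Your added remark that the argument tacitly uses $u\in H^2(\Omega)$ (beyond the stated $u\in H^1_0(\Omega)$) to make sense of $\mathbf{n}_\Gamma\cdot\nabla u$ on $\Gamma$ is a fair observation that the paper also leaves implicit.
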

\begin{proof}
Recalling the definitions of $a_h$ and $\ell_h$ in (\ref{ah}) -- (\ref{ellh}), it is immediate that the solution $u$ satisfies the equation $a_h(u, v_h)+\int_{\Omega}f_1(u)v_h=\ell_h(v_h)$, for every $v_h \in V_h$ and the result follows.  
\end{proof}

The following preliminary result investigating optimality with respect to interpolation is a  key ingredient of our approach.

\begin{prop}\label{pre}
Let $u \in H^1_0(\Omega)$ be the solution to the semilinear problem \normalfont (\ref{weak_sol}) \itshape and $u_h \in V_h$ its finite element approximation in  \normalfont  (\ref{Discr_state}).  \itshape Then, there exists a constant $C>0$, independent of $u$, $u_h$, such that 
\begin{equation}\label{error}
\vertiii{u_h-\Pi_hu}_h^2+\left\|u-u_h\right\|^p_{L^p(\Omega)}\leq C \left( \left[\vertiii{u-\Pi_h u}_{h} + j_h(\Pi_hu, \Pi_hu)^{1/2}\right]^2+\left\|u-\Pi_hu\right\|_{L^q(\Omega)}^q \right),
\end{equation} 
where $q$ is the  conjugate index of the power $p$ in the nonlinear term $f_1(u)=\left|u\right|^{p-2}u$.
\end{prop}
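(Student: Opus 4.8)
The plan is to test the discrete problem against the interpolation error $e_h:=u_h-\Pi_h u\in V_h$ and to bootstrap from the coercivity in \eqref{coerc}. Starting from $c_{bil}\vertiii{e_h}_h^2\le a_h(e_h,e_h)+j_h(e_h,e_h)$, I expand the first argument as $e_h=(u_h-u)+(u-\Pi_h u)$ and insert the Galerkin orthogonality \eqref{galerkin} with $v_h=e_h$. Since $u$ is smooth enough that its (extended) normal gradient jumps vanish, the stabilization telescopes: the term $-j_h(u_h,e_h)$ produced by \eqref{galerkin} combines with $j_h(u_h-\Pi_h u,e_h)=j_h(e_h,e_h)$ to leave only $-j_h(\Pi_h u,e_h)$. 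This yields the identity
\begin{equation*}
a_h(e_h,e_h)+j_h(e_h,e_h)=\int_{\Omega}\bigl[f_1(u)-f_1(u_h)\bigr]e_h+a_h(u-\Pi_h u,e_h)-j_h(\Pi_h u,e_h),
\end{equation*}
whose three contributions I estimate separately.

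The two linear contributions are routine. For the first I invoke the continuity \eqref{cont} with $v=u-\Pi_h u\in H^2(\Omega)+V_h$, followed by the trace bound $\vertiii{\cdot}_*\le C_*\vertiii{\cdot}_h$ and the elementary $\vertiii{\cdot}_*\le\vertiii{\cdot}_h$, obtaining $a_h(u-\Pi_h u,e_h)\le c_aC_*\vertiii{u-\Pi_h u}_h\vertiii{e_h}_h$. For the stabilization term I use the Cauchy--Schwarz inequality for the positive semidefinite form $j_h$ together with $j_h(e_h,e_h)\le\vertiii{e_h}_h^2$, giving $-j_h(\Pi_h u,e_h)\le j_h(\Pi_h u,\Pi_h u)^{1/2}\vertiii{e_h}_h$. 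Young's inequality then splits both bounds into an absorbable multiple $\eps\vertiii{e_h}_h^2$ plus $C\vertiii{u-\Pi_h u}_h^2$ and $C\,j_h(\Pi_h u,\Pi_h u)$, respectively.

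The nonlinear term is the crux. Splitting once more, $\int_{\Omega}[f_1(u)-f_1(u_h)]e_h=\int_{\Omega}[f_1(u)-f_1(u_h)](u_h-u)+\int_{\Omega}[f_1(u)-f_1(u_h)](u-\Pi_h u)$. The diagonal part is handled by the strong monotonicity of $f_1(s)=|s|^{p-2}s$, i.e.\ $(f_1(a)-f_1(b))(a-b)\ge c_p|a-b|^p$ for $p\ge2$, which gives $\int_{\Omega}[f_1(u)-f_1(u_h)](u_h-u)\le -c_p\|u-u_h\|_{L^p(\Omega)}^p$ and thereby furnishes the $\|u-u_h\|_{L^p(\Omega)}^p$ term on the left. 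For the cross part I use the Lipschitz-type estimate $|f_1(a)-f_1(b)|\le(p-1)(|a|+|b|)^{p-2}|a-b|$ and apply Young's inequality with the conjugate exponents $p,q$ to the pointwise product $|u-u_h|\cdot(|u|+|u_h|)^{p-2}|u-\Pi_h u|$; the factor $|u-u_h|^p$ is absorbed on the left, and because $(p-2)q+q=p$ the remainder is $C\int_{\Omega}(|u|+|u_h|)^{(p-2)q}|u-\Pi_h u|^q$.

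I expect the main obstacle to be the control of the weight $(|u|+|u_h|)^{(p-2)q}$ so that the constant stays independent of $u,u_h$: here I would rely on the a-priori energy bound together with the two-dimensional embedding $H^2(\Omega)\hookrightarrow L^\infty(\Omega)$ for $u$ and the stability bound for $u_h$, which render the weight uniformly bounded and reduce the cross term to $\eps\|u-u_h\|_{L^p(\Omega)}^p+C\|u-\Pi_h u\|_{L^q(\Omega)}^q$. Finally, collecting all estimates, choosing $\eps$ small enough to absorb the $\vertiii{e_h}_h^2$ and $\|u-u_h\|_{L^p(\Omega)}^p$ contributions on the left, and using $\vertiii{u-\Pi_h u}_h^2+j_h(\Pi_h u,\Pi_h u)\le[\vertiii{u-\Pi_h u}_h+j_h(\Pi_h u,\Pi_h u)^{1/2}]^2$ delivers \eqref{error}.
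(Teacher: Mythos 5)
Your proposal follows essentially the same route as the paper's proof: coercivity of $a_h+j_h$ tested with $e_h=u_h-\Pi_h u$, Galerkin orthogonality to telescope the stabilization down to $-j_h(\Pi_h u,e_h)$, continuity plus Young's inequality for the linear terms, and the monotonicity of $f_1$ (your pointwise $(f_1(a)-f_1(b))(a-b)\ge c_p\,|a-b|^p$ is exactly the paper's inequality (\ref{lemm}), just invoked at the end rather than added at the outset) together with a H\"older/Young step for the nonlinear cross term. The one point you flag as the main obstacle --- uniform control of the weight $(|u|+|u_h|)^{(p-2)q}$ so that $C$ is independent of $u,u_h$ --- is precisely what the paper absorbs without comment into the constant $C_{f_1}$, so your treatment is, if anything, more explicit than the original.
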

\begin{proof} 
Adapting for our purposes  the procedure in the proof of  \cite[Thm. 5.3.3, p. 319]{Ciarlet:1978} for the $p$--Laplacian, a first observation is that there exists $c>0$, such that
\begin{equation}\label{lemm}
\int_{\Omega}f_1(u-u_h)(u-u_h)\leq c\int_{\Omega}\left[f_1(u)-f_1(u_h)\right](u-u_h).
\end{equation}
Then, denoting $e_h:=u_h-\Pi_h u$, we successively  apply  the coercivity estimate (\ref{coerc}), (\ref{lemm}) and  the Galerkin orthogonality (\ref{galerkin}) to estimate 
\begin{align*}
 c_{bil}\vertiii{e_h}_{h}^2 +\frac{1}{c}\left\|u-u_h\right\|_{L^p(\Omega)}^p& \leq \left[a_h+j_h\right](e_h,e_h)+\frac{1}{c}\int_{\Omega}f_1(u-u_h)(u-u_h) \\
 & = a_h(u-\Pi_h u,e_h)+a_h(u_h- u,e_h) +j_h(e_h,e_h) + \\
 & \quad  + \int_{\Omega}\left[f_1(u)-f_1(u_h)\right](u-u_h) \\
 & = a_h(u-\Pi_h u,e_h)+j_h(-\Pi_h u, e_h)  +\int_{\Omega}\left[f_1(u)-f_1(u_h)\right](u-\Pi_hu).
\end{align*} 
A bound for the leading two terms is readily implied by   the continuity estimate (\ref{cont}), the Cauchy--Schwarz inequality and (\ref{approx2}): \small
\begin{align*}
 a_h(u-\Pi_h u,e_h)+j_h(-\Pi_h u, e_h)  &\leq c_a \vertiii{u-\Pi_h u}_{*}\vertiii{e_h}_{*}  + j_h(\Pi_hu, \Pi_hu)^{1/2}j_h(e_h,e_h)^{1/2} \\
 &\leq \left[c_aC_{*} \vertiii{u-\Pi_h u}_{*} + j_h(\Pi_hu, \Pi_hu)^{1/2}\right]\vertiii{e_h}_{h} \\
 &\leq \frac{\max\left\{c_aC_{*}, 1\right\}^2}{2c_{bil}}\left[\vertiii{u-\Pi_h u}_{*} + j_h(\Pi_hu, \Pi_hu)^{1/2}\right]^2+\frac{c_{bil}}{2}\vertiii{e_h}_{h}^2, 
\end{align*}\normalsize
while the third term is estimated by
\begin{align*}
\int_{\Omega}\left[f_1(u)-f_1(u_h)\right](u-\Pi_hu) &\leq C_{f_1} \left\|u-u_h\right\|_{L^p(\Omega)}\left\|u-\Pi_hu\right\|_{L^q(\Omega)} \\
&\leq \frac{1}{2c}\left\|u-u_h\right\|_{L^p(\Omega)}^p+\left(\frac{p}{2c}\right)^{-q/p}\frac{C_{f_1}}{q}\left\|u-\Pi_hu\right\|_{L^q(\Omega)}^q.
\end{align*}\normalsize
Hence, the  assertion  (\ref{error}) already follows for \small $C=\min\left\{\frac{c_{bil}}{2}, \frac{1}{2c} \right\}^{-1}\max\left\{\frac{\max\left\{c_aC_{*}, 1\right\}^2}{2c_{bil}}, \left(\frac{p}{2c}\right)^{-q/p}\frac{C_{f_1}}{q}\right\}$.  \normalsize 
\end{proof}

Under some additional regularity requirements for the solution $u$, we are now in a position to derive error estimates for our finite element approximations: 

\begin{thm}[Optimal convergence]\label{convergence}
Let $u \in H^1_0(\Omega)\cap H^2(\Omega)\cap  W^{2,q}(\Omega)$ be the solution to the semilinear problem \normalfont (\ref{weak_sol}) \itshape and $u_h \in V_h$ its finite element approximation in  \normalfont  (\ref{Discr_state}).  \itshape Then,  $\vertiii{u-u_h}_{*} =\mathcal{O}(h)$.
\end{thm}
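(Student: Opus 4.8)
The plan is to interpolate the error through the finite element approximant $\Pi_h u$ and to combine the quasi-optimal bound of Proposition \ref{pre} with the interpolation estimate of Lemma \ref{approx}. I begin with the triangle inequality
\[
\vertiii{u-u_h}_{*}\leq \vertiii{u-\Pi_h u}_{*}+\vertiii{\Pi_h u-u_h}_{*}.
\]
The first summand is $\mathcal{O}(h)$ immediately from Lemma \ref{approx}. For the second, since $\Pi_h u-u_h\in V_h$, the trace--based equivalence $\vertiii{v_h}_{*}\leq C_{*}\vertiii{v_h}_h$ reduces the task to bounding $\vertiii{u_h-\Pi_h u}_h$, which is precisely the quantity controlled by the left--hand side of (\ref{error}).

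It then suffices to show that the right--hand side of (\ref{error}) is $\mathcal{O}(h^2)$. The first group of terms $\left[\vertiii{u-\Pi_h u}_{h}+j_h(\Pi_h u,\Pi_h u)^{1/2}\right]^2$ is $\mathcal{O}(h^2)$ by Lemma \ref{approx}; here one uses that the $H^2$--stable extension $\mathcal{E}u$ has continuous normal gradient across the faces of $\mathcal{F}_G$, so that $j_h(u-\Pi_h u,u-\Pi_h u)=j_h(\Pi_h u,\Pi_h u)$ and the Cl\'ement estimate on $\Omega_{\mathcal{T}}$ controls the extended--domain gradient at the same linear rate. The second term calls on the extra $W^{2,q}$ regularity: the standard $L^q$ interpolation error estimate gives $\left\|u-\Pi_h u\right\|_{L^q(\Omega)}\leq Ch^2|u|_{W^{2,q}(\Omega)}$, whence $\left\|u-\Pi_h u\right\|_{L^q(\Omega)}^q\leq Ch^{2q}|u|_{W^{2,q}(\Omega)}^q$. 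This is exactly why both regularity hypotheses $H^2(\Omega)$ and $W^{2,q}(\Omega)$ enter the statement: the former drives the energy--norm part, the latter the nonlinear $L^q$ part.

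The only place the two approximation orders interact --- and the step I would treat most carefully --- is the verification that the higher--power $L^q$ contribution does not degrade the linear rate. Since $q=p/(p-1)>1$ is the conjugate exponent, we have $2q>2$, so $h^{2q}\leq h^2$ for $h\leq 1$ and the $L^q$ term is of strictly higher order. Therefore the right--hand side of (\ref{error}) is bounded by $C(h^2+h^{2q})\leq C'h^2$, which yields $\vertiii{u_h-\Pi_h u}_h=\mathcal{O}(h)$ and hence $\vertiii{\Pi_h u-u_h}_{*}=\mathcal{O}(h)$. Inserting this together with the first summand into the triangle inequality gives $\vertiii{u-u_h}_{*}=\mathcal{O}(h)$, as claimed. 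The residual technical care lies in the norm bookkeeping between the physical--domain norm $\vertiii{\cdot}_{*}$ supplied by Lemma \ref{approx} and the extended--domain norm $\vertiii{\cdot}_h$ appearing in (\ref{error}), which is handled by the extension properties noted above.
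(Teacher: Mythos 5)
Your proposal is correct and follows essentially the same route as the paper: the triangle inequality through $\Pi_h u$, the reduction of $\vertiii{\Pi_h u-u_h}_{*}$ to $\vertiii{\Pi_h u-u_h}_{h}$ via $C_{*}$, Proposition \ref{pre} combined with Lemma \ref{approx} and the Cl\'ement $L^q$ estimate, and the observation that $2q>2$ keeps the nonlinear contribution of higher order. Your explicit bookkeeping of the $\vertiii{\cdot}_{*}$ versus $\vertiii{\cdot}_{h}$ norms and of $j_h(u-\Pi_h u,\cdot)=j_h(-\Pi_h u,\cdot)$ via the vanishing normal-gradient jumps of the $H^2$ extension is a detail the paper leaves implicit in its appeal to ``the properties of the Cl\'ement interpolant,'' but it is the same argument.
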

\begin{proof}
We first decompose the total error $\vertiii{u-u_h}_{*}$ into its discrete--error and projection--error components; i.e.,
 $$
\vertiii{u-u_h}_{*}\leq \vertiii{u-\Pi_hu}_{*}+C_{*}\vertiii{\Pi_h u-u_h}_{h}.
 $$
 The desired estimate for the first term is already provided by (\ref{approx2}). Hence, it suffices to prove the assertion for  the latter, which is in turn  bounded by Proposition \ref{pre}. Indeed, by (\ref{approx2}) and the properties of the Cl{\'e}ment interpolant \cite[p.69]{EG2004}, estimate (\ref{error}) yields 
$$
\vertiii{u_h-\Pi_hu}_h^2 \leq \hat{C}  \left(h^2\left|u\right|_{H^2(\Omega)}^2+h^{2q}\left|u\right|^q_{W^{2,q}(\Omega)}\right)
$$ 
for $\hat{C}>0$. Recalling $q=\frac{p}{p-1}$ is the conjugate index of $p$, clearly $\min\left\{1, q\right\}=1$ and the bound is optimal. 
\end{proof}

\section{Numerical validation}\label{Section4}

In order to  verify the validity of the a-priori error estimate in Theorem \ref{convergence}, numerical simulations have been implemented in a python environment, using the open--source Netgen/NGSolve-ngsxfem finite element software. We consider a two--dimensional test case of (\ref{Semilinear Problem})  for $p=4$ with manufactured exact solution and right--hand side force defined by
$$
u\left(x,y\right)=\frac{1}{2}(1-x^2-y^2), \quad f\left(x,y\right)=\frac{1}{8}\left(1-x^2-y^2\right)^3+2
$$
in $\Omega=\mathcal{D}(0,1)$; i.e., the unit disc centered at the origin. As in Section \ref{Section2}, 
 the original domain $\Omega$ is immersed in the background domain $\mathcal{B}=\left[-1.5, 1.5\right]^{2}$.  To investigate orders of convergence, we consider a sequence of successively refined tessellations $\{\mathcal{B}_{h_\ell}\}_{\ell\geq 0}$ for $\mathcal{B}$ with
mesh parameters $h_\ell=0.15 \times 2^{-\ell}$ ($\ell=0, \dots ,6$). The stabilization constants  $\gamma_D$, $\gamma_1$ in (\ref{ah}) and (\ref{Stabilization term}) are taken to be equal to   $\gamma_D=1$ and $\gamma_1= 0.1$ respectively. By the theoretical error estimate stated in Theorem \ref{convergence}, we should expect first--order convergence rates with respect to the $H^{1}$--norm and additionally second order for the $L^{2}$--norm.

\begin{table}[H] 
\caption{Errors and experimental orders of convergence (EOC) for  $H^1$ and $L^2$ norms.}\label{table1}  
 \begin{center}
    \begin{tabular}{lcccc}
\toprule
$h_{\max}$ & $\qquad \left\|u-u_h\right\|_{H^1(\Omega)}$ &  $\text{EOC}$ & $\qquad \left\|u-u_h\right\|_{L^2(\Omega)}$ & $\text{EOC}$   \\ \midrule 
$0.15$ & 7.74620e-2 &  & 2.47468e-3  &     \\ 
$0.075$   & 3.90601e-2 &0.988 & 5.83351e-4  &  2.085  \\ 
$0.0375$     & 1.93383e-2 & 1.014 & 1.33451e-4  &   2.128 \\ 
$0.01875$    & 9.63082e-3 & 1.006 & 3.34143e-5  &  1.999 \\ 
$0.009375$    & 4.80627e-3 & 1.003 & 8.12293e-6  &  2.040 \\ 
$0.0046875$   & 2.40450e-3 & 0.999 &2.01406e-6  &   2.012 \\  
\midrule
Mean       &          & 1.002 &      & 2.049 \\
\bottomrule
\end{tabular}
\end{center}
\end{table}

As illustrated in Table \ref{table1}, the numerical findings validate the theoretically predicted rates of convergence and verify the effectiveness of the proposed framework.

\section{Conclusions}\label{Section5}

The present note concentrated on the derivation of  an a--priori error estimate for a cut finite element approximation of a semilinear model problem. To the authors' best knowledge, this is one of the few instances in the literature that such an analysis has been carried out beyond a linear context. Our approach is based on classical arguments for the $p$--Laplacian  \cite{Ciarlet:1978} and on key results from \cite{BH2012}  for a stabilized unfitted method for the Poisson problem.  
Future work will delve more deeply in the analysis of unfitted FEMs for general  time--dependent  problems with nonlinearities. From a computational point of view, the effect of preconditioning on the performance of the method  will be assessed in the spirit of \cite{ArKar19, KKA20}. Finally, the method seems promising for  controlling nonlinear PDEs  with uncertainties, involving large deformations and/or topological changes \cite{KBR19, KSNSRa2019, KSNSRa2019sub}.

\section*{Acknowledgments}
 
 This project has received funding from the Hellenic Foundation for Research and Innovation (HFRI) and  the  General  Secretariat  for  Research  and  Technology (GSRT), under  grant agreement No[1115] (PI: E. Karatzas), and the support  of the National Infrastructures for Research and Technology S.A. (GRNET S.A.) in the National HPC facility - ARIS - under project ID pa190902.


\begin{thebibliography}{9}
 
\bibitem{ArKar19}
 Aik. Aretaki, E.N. Karatzas, Random geometries,  preconditioned optimal control PDE problems 
discretized by a FEM with cut elements and a Quasi Monte Carlo method 
simulation, \emph{submitted for publication}. arXiv preprint:2003.00352, 2020.

\bibitem{ACS20} 
N.M. Atallah, C. Canuto, G. Scovazzi, Analysis of the shifted boundary method for the Stokes problem, 
\emph{Computer Methods in Appl. Mech. Engrg.} 358 (2020),112609.

\bibitem{BCHLM2014} 
E. Burman, S. Claus, P. Hansbo, M.G. Larson, A. Massing, 
CutFEM: discretizing geometry and partial differential equations, 
\emph{Int. J. Numer. Meth. Engrg.} 104 (2014), 472-501.

\bibitem{BH2010} 
E. Burman,  P. Hansbo, 
Fictitious domain finite element methods using cut elements: I. 
A stabilized Lagrange multiplier method, 
\emph{Comput. Methods Appl. Mech. Engrg.} 199 (41-44) (2010), 2680-2686.

\bibitem{BH2012} 
  E. Burman, P. Hansbo,
Fictitious domain finite element methods using cut elements II. 
A stabilized Nitsche method, 
\emph{Appl. Num. Math.} 2(4) (2012),  328-341. 

\bibitem{BG14}
W. Bo, J.W. Grove, A volume of fluid method based ghost fluid 
method for compressible multi-fluid flows, \emph{Computers \& Fluids} 90 (2014), 113-122.

\bibitem{Ciarlet:1978} 
P.G. Ciarlet, 
\emph{The Finite Element Method for Elliptic Problems}. 
North--Holland Publishing Co., 7th edition, 1978.

\bibitem{Clement:1996} 
P. Cl{\'e}ment, D. Guedes de Figueiredo, E. Mitidieri,
Quasilinear elliptic equations with critical exponents, 
\emph{Topological Methods in Nonlinear Analysis} 7(1) (1996),  133-170.
 
\bibitem{DL19}
M. Duprez, A. Lozinski, $\phi$--FEM: a finite element method on domains defined by level--sets,
arXiv preprint arXiv: 1901.03966v3, 2019.

\bibitem{EG2004} 
PA. Ern, J.--L. Guermond, 
\emph{Theory and Practice of Finite Elements}. 
Applied Mathematical Sciences, Vol. 159, Springer Verlag, 2004.
 
\bibitem{HaHa02}
A. Hansbo, P. Hansbo,   Nitsche's method for interface problems in computational mechanics, \emph{ Comput. Methods Appl. Mech. Eng.}  191 (2002),  5537-5552.

\bibitem{KBR19} 
E.N. Karatzas, F. Ballarin, G. Rozza,
Projection--based reduced order models for a cut finite element method in parametrized domains,  \emph{Computers \& Mathematics with Applications} 79(3) (2020), 833-851. 

 
\bibitem{KSASRa2019}
E.N. Karatzas, G. Stabile, N. Attalah, G. Scovazzi, G. Rozza, A  reduced order approach for the embedded shifted boundary FEM and a heat exchange system on   parametrized geometries, In: Fehr J., Haasdonk B. (eds)  IUTAM Symposium on Model Order Reduction of Coupled Systems, Stuttgart, Germany, May 22--25, 2018. IUTAM Bookseries, Vol. 36. Springer, Cham (2020).

\bibitem{KSNSRa2019}
E.N. Karatzas, G. Stabile, L. Nouveau, G. Scovazzi, G. Rozza, A  reduced basis approach for PDEs  on parametrized geometries
based on the shifted boundary finite element method and application to
a Stokes flow, \emph{Comput. Methods Appl. Mech. Engrg.} 347 (2019), 568-587.

\bibitem{KSNSRa2019sub}
E.N. Karatzas, G. Stabile, L. Nouveau, G. Scovazzi, G. Rozza, A  reduced--order shifted boundary finite element method for parametrized incompressible Navier--Stokes equations, \emph{submitted for publication}, arXiv preprint: 1907.10549, 2019.

 \bibitem{KKA20} 
G. Katsouleas, E.N. Karatzas, Aik. Aretaki,
An unfitted discontinuous Galerkin method for the Stokes system, \emph{in preparation}, 2020.
 
\bibitem{KB18}
E.M. Kolahdouz, A.P.S. Bhalla, B.A. Craven, B.E. Griffith,  An Immersed Interface Method for Faceted Surfaces, \emph{J. Comput. Physics}, 400 (2020), 1008854.

\bibitem{L19}
A. Lozinski, CutFEM without cutting the mesh cells: a new way to impose Dirichlet and Neumann boundary conditions on unfitted meshes,
arXiv preprint: 1901.03966, 2019.

 \bibitem{MI05}
R. Mittal, and G. Iaccarino, Immersed boundary methods, \emph{Annual Review of Fluid Mechanics} 37 (1) (2005), 239-261.

\bibitem{MS18}
A. Main, G. Scovazzi, The shifted boundary method for embedded domain computations. Part I: Poisson and Stokes problems, \emph{Journal of Computational Physics} 372 (2018), 972-995.

\bibitem{PHO16}
 V.  Pasquariello, G. Hammerl, F. \''{O}rley, S. Hickel, C. Danowski, A. Popp, W. A. Wall, N. A. Adams,  A cut-cell finite volume  -- finite element coupling approach for fluid--structure interaction in compressible flow, \emph{Journal of Computational Physics} 307 (2016), 670-695.

 \bibitem{P1972}
C. S. Peskin,
Flow patterns around heart valves: A numerical method,  
\emph{Journal of Computational Physics} 10 (1972), 252-271.

 \bibitem{W1975}
J. Wong,
On the generalized Emden--Fowler equation,  
\emph{SIAM Review} 17 (2) (1975), 339-360.

\bibitem{WFC13}
 C. H. Wu, O. M. Faltinsen, B. F. Chen, 
Time-Independent Finite Difference and Ghost Cell Method to Study Sloshing Liquid in 2D and 3D Tanks with Internal Structures, \emph{Communications in Computational Physics} 13 (3) (2013), 780-800.

\end{thebibliography}
\end{document}